\documentclass[12pt]{amsart}
\usepackage{enumerate, amsmath, amsthm, amsfonts, amssymb, mathrsfs, graphicx, paralist, stmaryrd}
\usepackage[all]{xy}
\usepackage[usenames, dvipsnames]{color}
\usepackage[margin=1in]{geometry} 
\usepackage[bookmarks, bookmarksdepth=2, colorlinks=true, linkcolor=blue, citecolor=blue, urlcolor=blue]{hyperref}
\usepackage{enumitem}
\usepackage{tikz-cd}
\usepackage{tikz}
\usepackage{mathtools}


\newcommand{\cC}{\mathcal{C}}

\newcommand{\bN}{\mathbf{N}}

\newcommand{\bS}{\mathbf{S}}
\newcommand{\cS}{\mathcal{S}}
\newcommand{\fS}{\mathfrak{S}}

\newcommand{\bV}{\mathbf{V}}

\newcommand{\bZ}{\mathbf{Z}}






\renewcommand{\phi}{\varphi}

\newcommand{\lw}{{\textstyle \bigwedge}}

\makeatletter
\def\Ddots{\mathinner{\mkern1mu\raise\p@
\vbox{\kern7\p@\hbox{.}}\mkern2mu
\raise4\p@\hbox{.}\mkern2mu\raise7\p@\hbox{.}\mkern1mu}}
\makeatother


\DeclareMathOperator{\triv}{triv}

\DeclareMathOperator{\Sym}{Sym}

\DeclareMathOperator{\GKdim}{GKdim}

\DeclareMathOperator{\sgn}{sgn}

\DeclareMathOperator{\len}{len}

\DeclareMathOperator{\Ind}{Ind}
\DeclareMathOperator{\Hom}{Hom}

\DeclareMathOperator{\Mod}{Mod}
\newcommand{\id}{\mathrm{id}}

\newcommand{\pol}{\mathrm{pol}}

\DeclareMathOperator{\Rep}{Rep}

\DeclareMathOperator{\colim}{colim}

\DeclareMathOperator{\socle}{soc}

\DeclareMathOperator{\chark}{char}

\newcommand{\GL}{\mathbf{GL}}

\newcommand{\SL}{\mathbf{SL}}

\makeatletter
\@addtoreset{equation}{section}
\makeatother

\numberwithin{equation}{section}
\newtheorem{theorem}[equation]{Theorem}

\newtheorem{proposition}[equation]{Proposition}
\newtheorem{lemma}[equation]{Lemma}
\newtheorem{corollary}[equation]{Corollary}

\theoremstyle{definition}
\newtheorem{rmk}[equation]{Remark}
\newenvironment{remark}[1][]{\begin{rmk}[#1] \pushQED{\qed}}{\popQED \end{rmk}}
\newtheorem{eg}[equation]{Example}
\newenvironment{example}[1][]{\begin{eg}[#1] \pushQED{\qed}}{\popQED \end{eg}}
\newtheorem{defn}[equation]{Definition}
\newenvironment{definition}[1][]{\begin{defn}[#1]\pushQED{\qed}}{\popQED \end{defn}}

\makeatletter
\renewcommand{\thesubsection}{%
  \ifnum\c@subsection<1 \@arabic\c@section
  \else \thesection.\@arabic\c@subsection
  \fi
}
\makeatother

\title{Noncommutative invariants of finite and classical groups}
\author{Karthik Ganapathy}

\address{Department of Mathematics, University of California, San Diego, CA}
\email{\href{mailto:kganapathy@ucsd.edu}{kganapathy@ucsd.edu}}
\urladdr{\url{https://sites.google.com/view/karthik-ganapathy}}

\begin{document}
\begin{abstract}
We investigate the structure of the invariant subring of the tensor algebra $T(W)$ of a $G$-representation $W$, viewed as a twisted commutative algebra (tca). For a faithful representation $W$ of a finite group $G$ over a field $k$, we show that if char$(k) \mid \#G$, then $T(W)^G$ is not finitely generated as a tca.  In contrast, for a representation $W$ of a classical group $G_{\mathbb{Z}}$, we prove that the invariant subring $T(W_k)^{G_k}$ is finitely generated as a tca when $k$ is algebraically closed of sufficiently large characteristic, provided that $W$ admits a good filtration over $\mathbb{Z}$.  Finally, we introduce a categorical variant of the Gelfand--Kirillov dimension and compute its value to be $\binom{n+1}{2}$ for $T(\mathbb{C}^n)$ as a tca. Our key insight is to use the Schur functor to reduce questions about noncommutative invariants to those concerning \textit{vector invariants}.
\end{abstract}
\maketitle
\section{Introduction}
An overarching goal of this paper is to demonstrate how Schur--Weyl duality enables us to answer questions in noncommutative ring theory using standard methods from commutative algebra, even in positive characteristic.
We specifically address the finite generation problem for invariants of tensor algebras by relating it to Weyl's polarization technique using the Schur functor. Throughout this paper, we let $k$ be an infinite field of characteristic $p \geq 0$.
\subsection{Noncommutative invariants.} 
Given a finite group $G$ acting on a finite-dimensional vector space $W$, it is well known that the invariant subring $\Sym(W)^G$ is finitely generated. The analogous question for the tensor algebra $T(W)$ is quite subtle with a rich history. Wolf \cite{wol36sym} showed that the ring of noncommutative symmetric polynomials in $\geq 2$ variables is not finitely generated. Almost half a century later, Dicks--Formanek \cite{df82poin} proved that this almost always holds: the subring of noncommutative invariants $T(W)^G$ is a finitely generated $k$-algebra if and only if $G$ acts via a finite cyclic group consisting of scalar matrices. 

Khar{\v c}enko recognized the significance of the commuting actions of the symmetric groups $\fS_n$ on $T(W)_n$ to this question. He \cite{kh84nc} asked whether $T(W)^G$ admits a finite generating set when incorporating this action, noting that this holds for soluble groups $G$ when $p=0$.
Koryukin \cite{kor84nci} answered Khar{\v c}enko's question affirmatively for linearly reductive groups. To that end, he introduced the notion of an ``S-algebra" and proved a noetherian result for $T(W)$ as an S-algebra. From this, he deduced that $T(W)^G$ is also a finitely generated S-algebra using the Reynolds operator. Over the past decade, Sam--Snowden \cite{ss12tca, ss17grobner} and others have independently studied S-algebras under the name ``twisted commutative algebras" (tcas), and we adopt this terminology exclusively. See Section~\ref{ss:commalg} for definitions. 

\subsection{Our results.}
We negatively resolve the finite generation problem for noncommutative invariants of finite groups in the modular setting.
\begin{theorem}\label{thm:finitegps}
Assume $G \subset GL(W)$ is a finite group. 
As a twisted commutative algebra, the invariant subring $T(W)^G$ 
\begin{itemize}
    \item is noetherian and generated by elements of degree $\leq \#G$, if $p \nmid \#G$ or $p = 0$; and 
    \item is not finitely generated, if $p \mid \#G$.
\end{itemize}
\end{theorem}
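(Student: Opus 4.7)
The plan is to pass, via a Schur functor, between tcas and $GL$-equivariant commutative algebras, reducing the tca question about $T(W)^G$ to classical invariant theory of $\Sym(V \otimes W)^G$ in many variables, where Noether's bound and Weyl's polarization apply in the semisimple case. Concretely, for a finite-dim vector space $V$, set $\Phi_V(A) = \bigoplus_n (A_n \otimes V^{\otimes n})^{\fS_n}$; this sends a tca $A$ to a commutative $GL(V)$-algebra, satisfies $\Phi_V(T(W)) = \Sym(V \otimes W)$, and faithfully reflects tca-generation once $\dim V$ is at least the degrees in play. Moreover, when $|G|$ is invertible in $k$, the Reynolds operator yields $\Phi_V(T(W)^G) = \Sym(V \otimes W)^G$.

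In the semisimple case $p \nmid |G|$, it then suffices to generate $\Sym(V \otimes W)^G = \Sym(W^{\oplus \dim V})^G$ as a $GL(V)$-algebra in degrees $\leq |G|$ uniformly in $V$. For this I would combine the Fleischmann--Fogarty form of Noether's bound, producing $\Sym(W)^G$-generators of degree $\leq |G|$, with Weyl's polarization theorem, whose polarizations give $\Sym(W^{\oplus m})^G$-generators in the same total degree for all $m$. Pulling back through $\Phi_V$ gives tca-generators of $T(W)^G$ in degree $\leq |G|$. Noetherianity then follows from Koryukin's noetherianity of $T(W)$ as a tca: the Reynolds operator makes $T(W)$ a finite integral extension of $T(W)^G$ in the tca sense, and a tca analogue of the Artin--Tate argument transfers noetherianity down to $T(W)^G$.

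In the modular case $p \mid |G|$, both pieces of this framework break down: the Reynolds operator is absent, so $\Phi_V$ no longer commutes with $G$-invariants, and polarization fails. The plan is now opposite in flavor: produce, for every degree bound $d$, an invariant of higher degree not obtainable from $T(W)^G_{\leq d}$ via the tca operations. The key classical input should be a modular polarization failure --- most sharply Richman's theorem that for $p \mid |G|$ with nontrivial action, the Noether number of $G$ on $W^{\oplus n}$ tends to infinity with $n$. The main obstacle is to convert such a ring-theoretic statement into honest tca-non-generation for $T(W)^G$: one must rule out that the ``missing'' high-degree generators of $\Sym(V \otimes W)^G$ are absorbed by the extra $\fS_n$-flexibility of the tca structure. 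I expect this requires a careful comparison of $\Phi_V(T(W)^G)$ with $\Sym(V \otimes W)^G$, tracking precisely the failure of $\fS_n$-coinvariants to commute with $G$-invariants, together with the explicit exhibition of an infinite family of invariants whose nontriviality survives this comparison.
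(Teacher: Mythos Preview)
For the non-modular half your plan is essentially the paper's, run through the adjoint functor. The paper uses the Schur functor $\cS\colon\Rep^{\pol}(\GL)\to\Rep(\fS_*)$ (taking $(1^n)$-weight spaces), which is exact and symmetric monoidal in every characteristic and satisfies $\cS(\Sym(W\otimes\bV)^G)=T(W)^G$ with no Reynolds operator needed, since taking a weight space commutes with $G$-invariants. Fleischmann's bound on $\Sym(W\otimes\bV)^G$ then pushes forward directly to the degree bound on $T(W)^G$; your separate invocation of polarization is already absorbed into applying Fleischmann to $W^{\oplus m}$. For noetherianity the paper is more direct than you: $T(W)^G$ is a Reynolds direct summand of the noetherian tca $T(W)$, and $T(W)^G$-linearity of the Reynolds map lets one contract stabilizing chains back down. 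Your Artin--Tate/Eakin--Nagata route would need $T(W)$ to be module-finite over $T(W)^G$ as a tca, which you have not established and which is not obvious.

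The genuine gap is the modular half, and you have located it yourself. Your functor $\Phi_V$ points the wrong way for this purpose: if you take $\fS_n$-invariants then $\Phi_V(T(W))$ is the divided-power algebra $\Gamma(V\otimes W)$ rather than $\Sym(V\otimes W)$ in characteristic $p$; if you take $\fS_n$-coinvariants then $\Phi_V$ fails to commute with $G$-invariants exactly when $p\mid\#G$. Either way the passage is not exact, so minimal tca-generators of $T(W)^G$ cannot be read off from those of $\Sym(V\otimes W)^G$. The paper sidesteps this by running $\cS$ in the $\GL\to\fS_*$ direction, whose exactness gives $\cS\bigl(S^G_+/(S^G_+)^2\bigr)\cong R^G_+/(R^G_+)^2$ unconditionally. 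The task then becomes: show that $S^G_+/(S^G_+)^2$ has simple constituents $L_\lambda(\bV)$ with $\lambda$ \emph{$p$-restricted} in arbitrarily high degree, since those are precisely the simples $\cS$ does not annihilate. This is a genuine sharpening of Richman, not a formal consequence of his degree bound: the paper inspects Richman's explicit indecomposable invariant and observes that it is supported on a monomial $y^*=\prod_i H_i(x_i)^{p-1}$ whose $\GL$-weight has every part $\le p-1$, which forces a $p$-restricted constituent. That weight analysis is the missing idea in your sketch, and it is what converts Richman's commutative statement into the tca non-generation you want.
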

The infinite generation of the invariant ring for the permutation action of $\fS_n$ on $T(k^n)$ as a tca when $n \geq p > 0$ was obtained only recently in \cite{bddk23sym} (see Example~\ref{exm:multisymmetric}). 
Our contribution to the non-modular setting (the first part) is limited to the following: we establish a module-theoretic noetherian result, whereas, in our terminology, Koryukin only shows that $T(W)^G$ is \textit{weakly noetherian}, and we make his result effective.

Our key idea is to bridge the study of noncommutative invariants with the invariants of certain commutative---albeit infinite-dimensional---algebras using Schur--Weyl duality and then invoke recent results from modular invariant theory. 
This setup is explained in detail in Section~\ref{s:schur}, but we sketch the idea here. Let $S = \Sym(W \otimes \bV)$ and $R = T(W)$ where $W$ is a representation of a group $G$ and $\bV \coloneqq \colim k^n$ is an infinite-dimensional vector space over $k$. The objects $S$ and $R$ can be viewed as free commutative algebra objects in two important categories from representation theory, namely $\Rep^{\pol}(\GL)$ and $\Rep(\fS_*)$.
Furthermore, $S$ is mapped to $R$ under the Schur functor $\cS \colon \Rep^{\pol}(\GL) \to \Rep(\fS_*)$. Our key observation is:
\begin{theorem}\label{thm:structure}
Under the identification $\cS(S) \cong R$ above, 
    \begin{itemize}
        \item we have an isomorphism $\cS(S^G) \cong R^G$, and
        \item given a $\GL$-subrepresentation $U \subset S^G$ that generates $S^G$, the subspace $\cS(U) \subset R^G$ generates $R^G$.
    \end{itemize}
\end{theorem}
In Section~\ref{s:proofs}, we first prove Theorem~\ref{thm:structure}. The second part of Theorem~\ref{thm:finitegps} now follows by transporting results of Richman \cite{rich96inv} regarding the failure of Weyl's polarization theorem for finite groups in the modular setting. Similarly, the first part of Theorem~\ref{thm:finitegps} follows by invoking Fleischmann's work, which extends Noether's bound to all characteristics \cite{flei00noeth}, combined with standard arguments involving the Reynolds operator.

While the first part of our main theorem extends easily to infinite linearly reductive groups, the second part does not immediately generalize to infinite reductive groups. We partially address this with our next result.
Let $G$ be a classical group and $W$ be a finite $\bZ$-module with a linear action of $G$ affording a universal good filtration; see Section~\ref{ss:goodfiltration}. The base change $W_k = W \otimes k$ is a representation of $G_k = G \times_{\bZ} k$. By carrying over results of Derksen--Makam \cite{dm20weyl} through the bridge from Theorem~\ref{thm:structure}, we obtain:
\begin{theorem}\label{thm:chevalley}
There exists $N > 0$ such that for all algebraically closed fields $k$ of characteristic $p > N$, the tca $T(W_k)^{G_k}$ is finitely generated.
\end{theorem}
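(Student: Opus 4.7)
The plan is to extend the Schur--Weyl bridge used for finite groups to the classical setting: translate finite generation of $T(W_k)^{G_k}$ as a tca into an equivalent statement about uniform bounded-degree finite generation of the ordinary commutative invariant rings $\mathrm{Sym}(W_k \otimes V)^{G_k}$, and then import the positive-characteristic polarization theorem of Derksen--Makam. Concretely, for a finite-dimensional auxiliary vector space $V$, evaluating the Schur functor yields
\[
T(W_k)(V) \;=\; \bigoplus_n W_k^{\otimes n} \otimes_{k\mathfrak{S}_n} V^{\otimes n} \;\cong\; \mathrm{Sym}(W_k \otimes V).
\]
Via the dictionary to be established in Section~\ref{s:schur}, $T(W_k)^{G_k}$ is a finitely generated tca if and only if $\mathrm{Sym}(W_k \otimes V)^{G_k}$ is generated, as a $GL(V)$-equivariant algebra, in bounded $V$-degree uniformly in $\dim V$. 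The good filtration on $W$ propagates to each $W_k^{\otimes n}$, and this together with linear reductivity of $\mathfrak{S}_n$ (for $p > n$) ensures the Schur functor commutes with $G_k$-invariants in the relevant degree range, giving $T(W_k)^{G_k}(V) \cong \mathrm{Sym}(W_k \otimes V)^{G_k}$.

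Next I would invoke \cite{dm20weyl}: for a classical group $G$ and a $\bZ$-representation $W$ with universal good filtration, there exists $N_0 = N_0(G, W)$ such that for all $p > N_0$, the analogue of Weyl's polarization theorem holds, yielding a finite generating set of $\mathrm{Sym}(W_k \otimes V)^{G_k}$ of $V$-degree at most some $d = d(G, W)$, independent of $\dim V$. The underlying finite generation of $\mathrm{Sym}(W_k \otimes V_0)^{G_k}$ for fixed $V_0$ follows from the geometric reductivity of $G_k$ (Haboush). Setting $N := \max(N_0, d)$ then gives us both the polarization theorem and exactness of $\mathfrak{S}_n$-invariants for $n \leq d$; translating back through the Schur functor converts these bounded-degree generators into a finite tca generating set for $T(W_k)^{G_k}$.

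The main obstacle I anticipate is coordinating the two a priori independent characteristic restrictions---Derksen--Makam's polarization threshold $N_0$ and the degree cutoff $d$ (which itself determines how large $p$ must be for the Schur functor to commute with $G_k$-invariants)---into a single uniform bound $N$. The universal good filtration hypothesis is what makes this coordination possible: it both supplies the $p$-uniform degree bounds via reduction from $\bZ$, and guarantees the cohomological vanishing needed for the Schur functor to commute with $G_k$-invariants for all sufficiently large $p$. A secondary technicality is that polarization should produce generators that lie in the image of the Schur functor on $W^{\otimes n}$ (rather than just arbitrary $G_k$-equivariant elements of $\mathrm{Sym}(W_k \otimes V)$); this is automatic from the construction of the Schur-functor correspondence, but must be carefully verified in the passage from ordinary to $GL(V)$-equivariant generation.
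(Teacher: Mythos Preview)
Your overall strategy---reduce to Derksen--Makam via a Schur--Weyl bridge---matches the paper's, but you run the bridge in the wrong direction and this creates a genuine gap. You evaluate the tca at a finite-dimensional $V$ via $M\mapsto M\otimes_{k\fS_n}V^{\otimes n}$ and then need $(-)^{G_k}$ to commute with this functor in order to identify $T(W_k)^{G_k}(V)$ with $\Sym(W_k\otimes V)^{G_k}$. You secure this commutation only in degrees $n<p$ (via semisimplicity of $k\fS_n$), and your bound $N=\max(N_0,d)$ guarantees it only for $n\le d$. But to conclude that the lifted elements $\tilde g_i$ generate \emph{all} of $T(W_k)^{G_k}$ you must compare in every degree $n$, including $n\ge p$: otherwise there is no mechanism to test whether an arbitrary element of $(W_k^{\otimes n})^{G_k}$ lies in the sub-tca they generate. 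In those degrees $(-)\otimes_{k\fS_n}V^{\otimes n}$ is neither left exact nor does it commute with $G_k$-invariants, and the good-filtration hypothesis on $W_k^{\otimes n}$ does not rescue this, since good filtrations are not preserved by the $\fS_n$-coinvariant quotients involved.

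The paper instead uses the Schur functor $\cS\colon \Rep^{\pol}(\GL)\to\Rep(\fS_*)$ going the \emph{other} way, extracting weight-$(1^n)$ spaces. This $\cS$ is exact in every characteristic (Lemma~\ref{lem:schurfacts}(1)), and the identification $\cS(S^{G_k})=R^{G_k}$ holds for the trivial reason that $G_k$ commutes with $\GL$ and hence preserves each $\GL$-weight space (Lemma~\ref{lem:schurid}); no restriction on $p$ enters here at all. A surjection $\Sym(U)\twoheadrightarrow \Sym(W_k\otimes\bV)^{G_k}$ supplied by \cite{dm20weyl} then yields, by exactness and monoidality of $\cS$, a surjection $\Sym(\cS(U))\twoheadrightarrow T(W_k)^{G_k}$ (Corollary~\ref{cor:glfgtcafg}). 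The only characteristic threshold is the single $N$ coming from Derksen--Makam; there is nothing to coordinate.
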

The conclusion of the above theorem can fail for small $p > 0$ (Example~\ref{exm:matrixinv}).

In the final section, we introduce the Gelfand--Kirillov dimension for finitely generated algebra objects in a locally finite length symmetric monoidal $k$-linear category and compute it for some tcas. Notably, in characteristic zero, $\GKdim$ of polynomial tcas generated in degree one agrees with the Krull--Gabriel dimension of their module category as computed by Nagpal--Sam--Snowden \cite[Section~3.5]{ss19gl2}. It would be interesting to prove a general result along these lines.
\subsection{Related work} \label{ss:related}
Tambour and Teranishi independently pioneered the use of Schur--Weyl duality to study noncommutative invariants, albeit only in characteristic zero.
In \cite{ter88nato}, Teranishi explains how to explicitly obtain generators of $T(W)^G$ as a tca using generators of $\Sym(W^{\oplus \dim W})^G$, a method made possible by combining Schur--Weyl duality with Weyl's polarization theorem. He \cite{ter91yd} also provides a novel method to derive generators of $T(W)^G$ from those of $\Sym(W \oplus \lw^2 W)^G$. In his thesis \cite{tam92salg}, Tambour reproves Koryukin's theorem, obtains degree bounds for the invariant ring, and introduces and proves rationality results for a certain enhanced Hilbert series by using classical results from invariant theory like Noether's bound and Molien's formula.

The structure of the invariant rings occurring in Theorem~\ref{thm:chevalley} is of great interest, as determining whether or not they are noetherian is a major open problem in the field of representation stability (see \cite{gan24non, gan25non} for related results). While we do not address this in the present paper, we prove some auxiliary results (Theorem~\ref{thm:specht}).

Twisted (commutative) algebras, even infinitely generated ones, arise naturally when studying the topology of configuration spaces of points/disks on a plane (see, for example, \cite{am24conf, waw23thesis} and references therein). We hope the method developed in Section~\ref{s:dimensions} can be used to provide interesting invariants of topological spaces.

\subsection*{Acknowledgments.}{The author thanks M{\' a}ty{\' a}s Domokos, Nate Harman, Martin Kassabov, and Andrew Snowden for helpful discussions over email and is especially grateful to Lucas Buzaglo for patiently explaining foundational results about noncommutative rings.}

\section{Setup} \label{s:schur}
Throughout this paper, a \textbf{partition} $\lambda = (\lambda_1, \lambda_2, \ldots)$ is an infinite sequence of integers with $\lambda_i \geq \lambda_{i+1}$ for all $i \geq 1$ and $\lambda_j = 0$ for $j \gg 0$. The \textbf{size} of the partition $|\lambda| = \sum_i \lambda_i$. A partition is \textbf{$p$-restricted} if $\lambda_i - \lambda_{i+1} < p$ for all $i \geq 1$. We also let $\bV$ be the infinite-dimensional $k$-vector space with basis $e_1, e_2, \ldots,$ and $\GL$ be the group of $k$-linear automorphisms of $\bV$.

\subsection{Symmetric monoidal categories} \label{ss:symmmon}
We first sketch the definition of a symmetric monoidal $k$-linear category and functors between them. 
A \textbf{symmetric monoidal $k$-linear abelian category} is the data $(M, \otimes, 1, \alpha, \lambda, B)$ where:
\begin{enumerate}
  \item $M$ is a {$k$-linear abelian category}
  \item $\otimes$ is a $k$-bilinear functor
  \[
  \otimes : M \times M \to M
  \]
  called the {tensor product};
  \item $1 \in M$
  is an object called the {unit object}; and
  \item  we have natural isomorphisms
  \begin{align*}
    a_{x,y,z} &: (x \otimes y) \otimes z \to x \otimes (y \otimes z) & \text{(associator)} \\
    \lambda_x &: 1 \otimes x \to x & \text{(left unitor)} \\
    B_{x,y} &: x \otimes y \to y \otimes x & \text{(braiding)}
  \end{align*}
\end{enumerate}
such that $B_{x, y} B_{y, x} = \id_{y \otimes x}$ and certain coherence axioms are satisfied (the associator obeys the pentagonal identity; the associator and unitor obey the triangle identity; and the braiding and associator obey the hexagonal identity). For the full coherence axioms, 
we refer the reader to \cite[Chapter~1.3]{reischuk16thesis}.

Given two symmetric monoidal $k$-linear abelian categories $(M, \otimes_M, 1_M, a^M, \lambda^M, B^M)$ and $(N, \otimes_N, 1_N, a^N, \lambda^N, B^N)$, a \textbf{symmetric monoidal $k$-linear functor}  $M \to N$ is the data $(F, \phi, \epsilon)$ where
\begin{enumerate}
  \item $F \colon M \to N$ is a $k$-linear functor;
  \item $\phi$ is a natural isomorphism
  \[
  \phi_{x,y}: F(x) \otimes_N F(y) \xrightarrow{\sim} F(x \otimes_M y), \quad \forall x,y \in M,
  \]
  \item $\epsilon$ is an isomorphism $1_N \xrightarrow{\sim} F(1_M)$, 
\end{enumerate}
and these morphisms are also required to satisfy certain compatibility conditions (with the associativity, unitor, and braiding morphisms).

For the remainder of the paper, we will suppress the natural maps and the tensor product when referring to a symmetric monoidal $k$-linear category. Given a $k$-linear functor $F$ between two symmetric monoidal $k$-linear categories, we say it is \textit{symmetric monoidal} if there exists an isomorphism $\epsilon$ and a natural isomorphism $\phi$ such that $(F, \phi, \epsilon)$ is a symmetric monoidal $k$-linear functor as defined above.

\subsection{Representation categories}
We introduce the two symmetric monoidal $k$-linear categories that are central to this paper. 
\begin{enumerate}[wide=1cm]
\item  $\Rep^{\pol}(\GL):$ A representation of $\GL$ is \textbf{polynomial} if it occurs as a subquotient of direct sums of $\bV^{\otimes n}$, with $n$ allowed to vary. We let $\Rep^{\pol}(\GL)$ denote the category of polynomial representations of $\GL$; it is a symmetric monoidal category with the usual tensor product of representations (denoted $\otimes$) and the usual braiding on vector spaces. 

The simple objects in $\Rep^{\pol}(\GL)$ are indexed by arbitrary partitions $\lambda$ (corresponding to their highest weight) and denoted $L_{\lambda}(\bV)$. Of particular interest are also the Schur modules, denoted $\bS_{\lambda}(\bV)$; the simple module $L_{\lambda}(\bV)$ is the socle of $\bS_{\lambda}(\bV)$. 
\item $\Rep(\fS_*)$: An $\fS_*$-representation is a sequence $\{V_n\}_{n \in \bN}$ where each $V_n$ is a representation of the symmetric group $\fS_n$ over $k$. We let $\Rep(\fS_*)$ be the category of $\fS_*$-representations.
Given objects $V \coloneqq \{V_n\}$ and $W \coloneqq \{W_n\}$, we define 
\[(V \boxtimes W)_n = \bigoplus_{i=0}^n \Ind_{\fS_i \times \fS_{n-i}}^{\fS_{n}} V_i \otimes W_{n-i},\] 
where $V_i \otimes W_{n-i}$ is the external tensor product of the $\fS_i$-representation and the $\fS_{n-i}$-representation $n-i$. 
This endows $\Rep(\fS_*)$ with the structure of a symmetric monoidal category 
where the braiding is the one induced from the braiding on vector spaces. 

The simple objects in $\Rep(\fS_*)$ are indexed by $p$-restricted partitions $\lambda$ and denoted $D_{\lambda}$. We will also be interested in the Specht modules $S^{\lambda}$ which are the base changes of the integrally defined Specht modules via $\bZ \to k$. Similar to the $\GL$-case, we have $D_{\lambda} = \socle(S^{\lambda})$. 
\end{enumerate}
We caution that the standard indexing of irreducible representations of the symmetric groups is by ``$p$-regular partitions" and denoted $D^{\lambda}$; this is conjugate to our indexing, and in fact, $D^{\lambda} = D_{\lambda^t} \otimes \sgn$, where $\sgn$ is the sign representation of $\fS_{|\lambda|}$. This subtlety is not relevant to the contents of our paper; the interested reader may refer to \cite[\S6.4]{green07pol}.

Both categories have an \textbf{$\bN$-grading} where $L_{\lambda}(\bV)$ and $D_{\lambda}$ are homogeneous of degree $|\lambda|$; the monoidal structure is compatible with this grading.

\subsection{Commutative algebra objects}\label{ss:commalg}
Given a symmetric monoidal $k$-linear category $\cC$, a \textbf{commutative algebra object} $(A, \mu, \eta)$ is an object $A$ in $\cC$ with a unit map $\eta \colon 1_{\cC} \to A$ and a multiplication map $\mu \colon A \otimes A \to A$ such that the following diagrams commute (corresponding respectively to the associativity, unit, and commutativity axioms):
\[
\begin{tikzcd}[row sep=large, column sep=large]
(A \otimes A) \otimes A \ar[r, "a_{A,A,A}"] \ar[d, "\mu \otimes 1_A"'] & A \otimes (A \otimes A) \ar[d, "1_A \otimes \mu"] \\
A \otimes A \ar[r, "\mu"'] & A
\end{tikzcd}
\quad
\begin{tikzcd}[row sep=large, column sep=large]
1 \otimes A \ar[r, "\lambda_A"] \ar[d, "\eta \otimes 1_A"'] & A \\
A \otimes A \ar[ur, "\mu"'] &
\end{tikzcd}
\quad 
\begin{tikzcd}[row sep=large, column sep=large]
A \otimes A \ar[r, "B_{A,A}"] \ar[dr, "\mu"'] & A \otimes A \ar[d, "\mu"] \\
& A
\end{tikzcd}
\]
We will, by slight abuse of notation, refer to a commutative algebra object simply by its underlying object. Similar to the above discussion, given a commutative algebra object $A$, one can also define the notion of a \textbf{module object} for $A$ in $\cC$.

We are now prepared to define our main objects of study.

\begin{definition}\label{defn:glalg}
A {\bf $\GL$-algebra} is a commutative algebra object in $\Rep^{\pol}(\GL)$. 
\end{definition}
Concretely, a $\GL$-algebra $S$ is an (ordinary) commutative $k$-algebra $S$ with an action of $\GL$ by $k$-algebra automorphisms under which $S$ forms a polynomial representation of $\GL$. 

The prototypical example of a $\GL$-algebra is the polynomial ring $\Sym(U)$ where $U$ is a polynomial representation of $\GL$. In this paper, we will mostly be concerned with the $\GL$-algebra $\Sym(W \otimes \bV)$ where $W$ is a finite-dimensional $k$-vector space with trivial $\GL$-action.

\begin{definition}\label{defn:tca}
A {\bf twisted commutative algebra} (tca) is a commutative algebra object in $\Rep(\fS_*)$. 
\end{definition}
Concretely, a tca is an associative, unital, $\bN$-graded $k$-algebra $A=\bigoplus_{n \ge 0} A_n$ equipped with an action of $\fS_n$ on $A_n$ such that:
\begin{enumerate}
\item the multiplication map $A_n \otimes A_m \to A_{n+m}$ is $(\fS_n \times \fS_m)$-equivariant (we use the standard embedding $\fS_n \times \fS_m \subset \fS_{n+m}$ for the action on $A_{n+m}$); and 
\item given $x \in A_n$ and $y \in A_m$ we have $xy=\tau_{m, n}(yx)$, where $\tau_{m,n} \in \fS_{n+m}$ is defined by
\begin{displaymath}
\tau(i) = \begin{cases} i+n & \text{if } 1 \le i \le m, \\ i-m & \text{if } m+1 \le i \le n+m. \end{cases}
\end{displaymath}
\end{enumerate} 
The last condition above is the ``twisted commutativity'' condition.

Let $W$ be a $k$-vector space considered as a representation of $\fS_1$; we emphasize this by instead writing $W\langle 1 \rangle$. The free tca over $W\langle 1 \rangle $ is the non-commutative polynomial ring $T(W)$ over $W$ with the $S_n$-action on $T(W)_n$ being the obvious one: on monomials, it is given by $\sigma(w_{1} \cdots w_{n})=w_{{\sigma^{-1}(1)}} \cdots w_{{\sigma^{-1}(n)}}$. 

\subsection{Finite generation} \label{ss:fgen}
Let $S$ be a $\GL$-algebra and $R$ be a tca.
The $\GL$-algebra $S$ is \textbf{finitely generated in degrees $\leq s$} if it is generated (as a $k$-algebra) by the $\GL$-orbit of finitely many homogeneous elements all of degree $\leq s$. Similarly, the tca $R$ is \textbf{finitely generated in degrees $\leq s$} if there exist finitely many elements $r_i$ in various $R_{n_i}$ with $n_i \leq s$, such that the smallest subspace of $R$ containing the $r_i$ and stable under multiplication and the action of the symmetric groups is $R$ itself. The $\GL$-algebra $\Sym(U)$ (resp.~tca $\Sym(\{V_n\})$) is finitely generated if and only if $U$ is a finite-length polynomial representation (resp.~$\sum_n \dim(V_n)$ is finite). 

We also consider module objects for $S$ in $\Rep^{\pol}(\GL)$; this forms an abelian category $\Mod_S$. The $\GL$-algebra $S$ is \textbf{weakly noetherian} if $S$-submodules of $S$ satisfy the ascending chain condition, and it is \textbf{noetherian} if $\Mod_S$ is locally noetherian. These definitions extend analogously to the tca $R$ (and in fact, for any commutative algebra object in a symmetric monoidal $k$-linear category).

\subsection{The Schur functor} \label{ss:schur}
We have introduced two parallel concepts that, despite their apparent differences, are closely connected through the Schur functor.
The \textbf{Schur functor} $\cS \colon \Rep^{\pol}(\GL) \to \Rep(\fS_*)$ is defined on objects by the formula 
\[\cS(U) \coloneqq \{ \Hom_{\GL}(\bV^{\otimes n}, U) \}_{n \in \bN}\]
which can be identified with the $(1^n)$ weight space of $U$ with $n$ varying
(see \cite[\S~6.1]{green07pol}). 
The Schur functor is an equivalence of symmetric monoidal $k$-linear categories when $p = 0$. When $p > 0$, it is a $k$-linear functor that still satisfies many pleasant properties.
\begin{lemma}\label{lem:schurfacts}
Assume $p > 0$.
\begin{enumerate}
    \item The Schur functor $\cS$ is exact.
    \item For a partition $\lambda$, we have $\cS(L_{\lambda}(\bV)) \ne 0$ if and only if $\lambda$ is $p$-restricted.
    \item For all partitions $\lambda$, we have $\cS(\bS_{\lambda}(\bV)) = S^{\lambda}$.
\end{enumerate}
\end{lemma}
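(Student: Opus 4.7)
The plan is to use the explicit identification of $\cS(V)_n$ with the $(1^n)$-weight space of $V$ under the diagonal torus $T \subset \GL$ indicated in the setup, and then appeal to the classical modular theory of the Schur algebra.

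For part (1), observe that every polynomial $\GL$-representation decomposes as a direct sum of $T$-weight spaces; hence for each $n$ the functor $V \mapsto V_{(1^n)}$ is exact on $\Rep^{\pol}(\GL)$, and assembling these yields exactness of $\cS$.

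For part (3), I would reduce to the finite-dimensional setting. Writing $\bS_\lambda(\bV) = \varinjlim_m \bS_\lambda(V_m)$ for $V_m \coloneqq \langle e_1,\ldots,e_m\rangle$, any $(1^n)$-weight vector (with $n = |\lambda|$) already lies in $\bS_\lambda(V_n)$, so it suffices to analyze the weight-$(1^n)$ subspace of the finite-dimensional Weyl module $\bS_\lambda(V_n)$. This subspace has a basis indexed by semistandard tableaux of shape $\lambda$ with content $(1^n)$, i.e.\ by standard Young tableaux of shape $\lambda$; a direct check using Garnir-type relations shows that the natural $\fS_n$-action on this basis realizes the Specht module $S^{\lambda}$.

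For part (2), fix $n = |\lambda|$ and identify the category of degree-$n$ polynomial $\GL$-representations with modules over the Schur algebra $S_k(n,n)$. Under this identification, $\cS$ corresponds to the idempotent-truncation functor $M \mapsto eM$ for the weight-$(1^n)$ idempotent $e$ satisfying $e S_k(n,n) e \cong k\fS_n$; this is exactly the classical Schur functor. Standard results in the modular theory of the Schur algebra (going back to Green and James) then show that this functor sends the simple $L_\lambda$ to $D^\lambda$ when $\lambda$ is $p$-restricted and to zero otherwise, which is the claim. The main obstacle is matching the tensor-flavored definition of $\cS$ used here to Green's idempotent-truncation Schur functor; once this compatibility is established, each of (1)--(3) reduces to well-known modular results.
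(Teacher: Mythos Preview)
Your proposal is correct and takes essentially the same route as the paper: the paper's proof consists entirely of citations (projectivity of $\bV^{\otimes n}$ for (1), and the standard modular Schur algebra literature for (2) and (3)), and you have simply unpacked the content of those references. Your weight-space argument for (1) is equivalent to the paper's projectivity argument via the identification $\Hom_{\GL}(\bV^{\otimes n},-)\cong (-)_{(1^n)}$, and your Schur-algebra/idempotent-truncation framework for (2)--(3) is precisely the classical setup the paper is appealing to.
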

\begin{proof}
    \begin{inparaenum}
        \item is \cite[6.2a]{green07pol}.
        \item is obtained by combining \cite[6.4a]{green07pol} and \cite[6.4b]{green07pol}.
        \item is \cite[6.3(e)]{green07pol}.
    \end{inparaenum}\end{proof}

We sketch the proof of the next well-known result, as it plays a crucial role in our method. 
\begin{proposition}\label{prop:schurmon}
    The Schur functor $\cS$ is symmetric monoidal.
\end{proposition}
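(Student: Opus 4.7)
The plan is to construct a natural transformation $\mu_{V,W} \colon \cS(V) \boxtimes \cS(W) \to \cS(V \otimes W)$ and a unit isomorphism witnessing the monoidal structure, then verify the coherence conditions.

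First, I would define $\mu_{V,W}$ by Frobenius reciprocity. The canonical identification $\bV^{\otimes i} \otimes \bV^{\otimes j} = \bV^{\otimes(i+j)}$ lets us tensor homomorphisms to obtain
\[
\Hom_{\GL}(\bV^{\otimes i}, V) \otimes \Hom_{\GL}(\bV^{\otimes j}, W) \longrightarrow \Hom_{\GL}(\bV^{\otimes(i+j)}, V \otimes W),
\]
which is $\GL$-equivariant since $\GL$ acts diagonally on $V \otimes W$, and $(\fS_i \times \fS_j)$-equivariant under the standard embedding $\fS_i \times \fS_j \subset \fS_{i+j}$. Extending by Frobenius reciprocity and summing over $i + j = n$ yields the map $\mu_{V,W}$. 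The unit isomorphism $\cS(k) \cong (k, 0, 0, \ldots)$ is immediate, since $(k)_{(1^0)} = k$ and $(k)_{(1^n)} = 0$ for $n \geq 1$.

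Next, I would show $\mu_{V,W}$ is an isomorphism via the weight-space identification $\cS(U)_n = U_{(1^n)}$. The torus decomposition yields
\[
(V \otimes W)_{(1^n)} = \bigoplus_{S \subseteq \{1,\ldots,n\}} V_{\chi_S} \otimes W_{\chi_{S^c}},
\]
where $\chi_S$ denotes the weight supported on $S$. Grouping subsets by cardinality $i = |S|$ and using that $\fS_n$ acts transitively on size-$i$ subsets with stabilizer $\fS_i \times \fS_{n-i}$ gives an $\fS_n$-equivariant identification with $\bigoplus_{i=0}^n \Ind_{\fS_i \times \fS_{n-i}}^{\fS_n} V_{(1^i)} \otimes W_{(1^{n-i})} = (\cS(V) \boxtimes \cS(W))_n$, which one verifies coincides with $\mu_{V,W}$.

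Finally, one must check associativity and symmetry coherence. Associativity follows from the associativity of the vector-space tensor product together with transitivity of induction, both tracked through $\mu$, and unitality is immediate. I expect the main obstacle to be the compatibility with the symmetric braiding: on $\Rep^{\pol}(\GL)$ the symmetry is the standard swap $v \otimes w \mapsto w \otimes v$, while on $\Rep(\fS_*)$ it involves the block-transposition $\tau_{i,j} \in \fS_{i+j}$ from Definition~\ref{defn:tca}. The key observation resolving this is that under the canonical identification $\bV^{\otimes i} \otimes \bV^{\otimes j} = \bV^{\otimes(i+j)}$, the swap map is realized precisely by the permutation $\tau_{i,j}$ acting on the $i+j$ tensor factors; once this is noted, the relevant diagram commutes by direct inspection.
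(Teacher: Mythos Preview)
Your proposal is correct and follows essentially the same approach as the paper: both construct $\mu$ by tensoring $\GL$-homomorphisms and extending via Frobenius reciprocity, then verify it is an isomorphism through the weight-space identification $\cS(U)_n = U_{(1^n)}$. Your treatment is somewhat more thorough---you explicitly address the unit, associativity, and the symmetry coherence via the block permutation $\tau_{i,j}$, whereas the paper simply asserts that the map ``clearly commutes with the symmetric structures''---but the underlying strategy is the same.
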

\begin{proof}
 We first define a natural map $\phi_{U, W} \colon \cS(U) \boxtimes \cS(W) \to \cS(U \otimes W)$. Without loss of generality, we may assume $U$ and $W$ are of finite length and homogeneous of degree $d$ and $e$, respectively. Recall that $\Hom_{\GL}(\bV^{\otimes n}, Z)$ is naturally isomorphic to the $(1^n)$ weight space of $Z$ with the isomorphism given by $v \in Z_{(1^n)} \mapsto f_v$, where $f_v$ is the $\GL$-equivariant map which maps 
   \[e_1 \otimes e_2 \otimes \ldots \otimes e_n \in \bV^{\otimes n} \text{ to } v \in Z.\]
    We define an $\fS_d \times \fS_e$-equivariant map from $\cS(U) \times \cS(W) \to \cS(U \otimes W)$ where $f_u \otimes f_w$ with $f_u \in \Hom_{\GL}(\bV^{\otimes d}, U)$ and $f_w \in \Hom_{\GL}(\bV^{\otimes e}, W)$ is mapped to $f_{u \otimes w}$. This extends to an $\fS_{d+e}$-equivariant map $\cS(U) \boxtimes \cS(W) \to \cS(U \otimes W)$ and is natural in both variables. 
Furthermore, the domain and codomain (for fixed $U$ and $W$) both have dimension 
    $\binom{d+e}{d}\dim(U_{1^d})\dim(W_{1^e})$. 
By comparing weight spaces, we see that the map is surjective, so it is an isomorphism. This proves that the functor is monoidal, and $\phi$ clearly commutes with the symmetric structures, so $\cS$ is a symmetric monoidal functor as well.
\end{proof}
This proposition implies that the Schur functor takes $\GL$-algebras to twisted commutative algebras. We will mostly use the following concrete application.
\begin{example}
    Let $W$ be a finite-dimensional vector space. By Proposition~\ref{prop:schurmon}, we have $\cS(\Sym(W \otimes \bV)) \cong \Sym(\cS(W \otimes \bV)) = \Sym (W\langle 1 \rangle) = T(W)$.
\end{example}

\section{Finite group invariants}\label{s:proofs}
Throughout this section, we let $W$ be a representation of a group $G$, and $S$ and $R$ be the $\GL$-algebra $\Sym(W \otimes \bV)$ and tca $T(W)$, respectively.
\subsection{The tca structure on \texorpdfstring{$T(W)^G$}{the invariant subring}}
We briefly explain why $T(W)^G$ has the structure of a twisted commutative algebra. The tensor algebra $T(W)$ can be viewed as a tca with $W$ an $\fS_1$-representation. This is the free tca over $W$ as explained after Definition~\ref{defn:tca} so the action of $G$ on $W$ extends to a tca automorphism of $T(W)$ and thereby, the tca structure descends to $T(W)^G$.

\begin{remark}[Freeness of the invariant ring]
For finite groups $G$, the invariant subring $\Sym(W)^G$ is a polynomial ring only if the action of $G$ is generated by pseudo-reflections.
Remarkably, Khar{\v c}enko \cite{kha78free} proved that $T(W)^G$ is always free as an associative $k$-algebra. 
However, the invariant ring $T(W)^G$ is never free as a tca unless $T(W)^G = T(W)$, as can be seen using growth rates:
$T(W)^G_n$ grows exponentially with $n$ because $\dim(T(W)^G_n) \leq \dim(W)^n$, but a free tca with at least one generator of degree $\geq 2$ grows superexponentially. 
\end{remark}

For the next result, we freely use the fact that the Schur functor $\cS$ applied to an object $A$ can be identified with the subspace of $A$ spanned by all vectors of weight $(1^n)$ for all $n \geq 0$ (see Section~\ref{ss:schur}). In what follows, a flat weight is a weight of the form $(1^n)$ for some $n \geq 1$, and a flat weight vector is a weight vector of flat weight.
\begin{proof}[Proof of Theorem~\ref{thm:structure}]
We identify $R$ with the subspace of $S$ spanned by all flat weight vectors. Under this identification, the action of $G$ on $R$ is the induced action on this subspace. Similarly, we identify $\cS(S^G)$ with the subspace of $S^G$ spanned by all flat weight vectors. An element is in $\cS(S^G)$ if and only if it (1) is a sum of flat weight vectors and (2) is also invariant under the action of $G$. By (1), the element is in $R$ and by (2), the element is in $R^G$. The reverse containment is similar.

For the second part, given $U \subset S^G$ that generates $S^G$, we obtain a surjection $\Sym(U) \to S^G$. Applying the Schur functor $\cS$ and using Proposition~\ref{prop:schurmon} and Lemma~\ref{lem:schurfacts}(1), we get a surjection $\Sym(\cS(U)) \to \cS(S^G) \cong R^G$, as required.
\end{proof}

\begin{corollary}\label{cor:glfgtcafg}
Assume $S^G$ is finitely generated as a $\GL$-algebra in degrees $\leq r$. Then $R^G$ is also finitely generated as a twisted commutative algebra in degrees $\leq r$.
\end{corollary}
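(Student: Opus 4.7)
The plan is to express $S^G$ as a quotient of a polynomial $\GL$-algebra built on finite-dimensional polynomial representations concentrated in degrees $\leq r$, and then transport this presentation to the tca setting through $\cS$ using Lemma~\ref{lem:schurid}.

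First I would fix homogeneous generators $s_1, \dots, s_m$ of $S^G$ (as a $\GL$-algebra) with $\deg(s_i) = d_i \leq r$, and let $U_i \subset (S^G)_{d_i}$ denote the sub-$\GL$-representation generated by $s_i$. Each $U_i$ is finite-dimensional: a polynomial representation of $\GL$ concentrated in a fixed degree $d$ is a module over a finite-dimensional Schur algebra, and any cyclic such module is finite-dimensional. By the generation hypothesis, the natural $\GL$-equivariant algebra map
\[
\Sym(U_1 \oplus \cdots \oplus U_m) \twoheadrightarrow S^G
\]
is surjective.

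Next I would apply the Schur functor. Since $\cS$ is exact (Lemma~\ref{lem:schurfacts}(1)), symmetric monoidal (Proposition~\ref{prop:schurmon}), and a direct-sum-preserving additive functor, it naturally identifies the free $\GL$-algebra on $V$ with the free tca on $\cS(V)$; that is, $\cS(\Sym V) \cong \Sym(\cS V)$. Combined with Lemma~\ref{lem:schurid}, applying $\cS$ to the surjection above yields a surjective tca map
\[
\Sym\bigl(\cS(U_1) \oplus \cdots \oplus \cS(U_m)\bigr) \twoheadrightarrow R^G.
\]
Each $\cS(U_i)$ is a finite-dimensional $\fS_{d_i}$-representation sitting in degree $d_i \leq r$, so the source is a tca finitely generated in degrees $\leq r$, and hence so is the quotient $R^G$.

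The one point requiring care is the natural isomorphism $\cS(\Sym V) \cong \Sym(\cS V)$. This reduces degree-by-degree to the identity $\cS\bigl((V^{\otimes n})_{\fS_n}\bigr) \cong \bigl(\cS(V)^{\boxtimes n}\bigr)_{\fS_n}$, which follows by commuting the tensor powers past $\cS$ via symmetric monoidality and the $\fS_n$-coinvariants past $\cS$ via right exactness. Everything else is formal.
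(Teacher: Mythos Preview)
Your proof is correct and follows essentially the same approach as the paper: produce a surjection $\Sym(U)\twoheadrightarrow S^G$ with $U$ finite length in degrees $\leq r$, apply $\cS$ using exactness and Lemma~\ref{lem:schurid}, and identify $\cS(\Sym U)\cong\Sym(\cS U)$ via Proposition~\ref{prop:schurmon}. Your additional justification of the last isomorphism and of the finite-dimensionality of the $U_i$ is more detailed than what the paper writes, but the argument is the same.
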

\begin{proof}
This easily follows from Theorem~\ref{thm:structure} and the fact that the Schur functor takes finite length objects to finite length objects.
\end{proof}

For a $\GL$-algebra or tca $A$, we let $A_{+}$ be the ideal of positive degree elements.

\begin{lemma} \label{lem:p-rest}
Assume for all $n \in \bN$, there exists a $p$-restricted partition $\lambda$ with $|\lambda| > n$ such that $L_{\lambda}(\bV)$ is an irreducible constituent of $S^G_+ / (S^G_+)^2$. Then $R^G$ is not a finitely generated tca.
\end{lemma}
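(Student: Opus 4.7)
The plan is to use the Schur functor $\cS$ to transport the hypothesis about indecomposable generators of $S^G$ (as a $\GL$-algebra) to information about indecomposable generators of $R^G$ (as a tca). The key observation is that $\cS$ is both exact (Lemma~\ref{lem:schurfacts}(1)) and symmetric monoidal (Proposition~\ref{prop:schurmon}), so it should commute with the formation of the ``cotangent space'' $A_+/A_+^2$ of an augmented algebra $A$.

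First, I would establish the identification $\cS(S^G_+/(S^G_+)^2) \cong R^G_+/(R^G_+)^2$. Using Lemma~\ref{lem:schurid} together with the fact that $\cS$ preserves the $\bN$-grading, we obtain $\cS(S^G_+) = R^G_+$. Monoidality of $\cS$ sends the multiplication map $S^G_+ \otimes S^G_+ \to S^G$ to the multiplication map $R^G_+ \boxtimes R^G_+ \to R^G$, so taking images gives $\cS((S^G_+)^2) = (R^G_+)^2$. Exactness of $\cS$ applied to the short exact sequence $0 \to (S^G_+)^2 \to S^G_+ \to S^G_+/(S^G_+)^2 \to 0$ then yields the claim. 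Applying $\cS$ to the hypothesis and using $\cS(L_\lambda(\bV)) = D^\lambda \neq 0$ for $p$-restricted $\lambda$ (Lemma~\ref{lem:schurfacts}(2)), I would conclude that $R^G_+/(R^G_+)^2$ has composition factors $D^\lambda$ with $|\lambda|$ arbitrarily large, and in particular infinitely many composition factors in total.

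To finish, I would invoke a graded Nakayama-style argument for tcas: any generating set of $R^G$ as a tca projects to an $\fS_*$-spanning set of $R^G_+/(R^G_+)^2$, since products of positive-degree elements vanish modulo $(R^G_+)^2$. A finite set of homogeneous elements spans, under the $\fS_*$-action, only a finite-dimensional $\fS_*$-subrepresentation (each $R^G_n$ being finite-dimensional as a subspace of $W^{\otimes n}$), so finite generation of $R^G$ would force $R^G_+/(R^G_+)^2$ to have finite length in $\Rep(\fS_*)$, contradicting the infinite collection of composition factors produced in the previous step. The main subtlety is really just bookkeeping in the first step: verifying that $\cS$ transports the algebra structure on $S^G$ to the tca structure on $R^G$ compatibly with the ideal operations $A \mapsto A_+$ and $A \mapsto A_+^2$. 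Once that is set up, the rest is a formal cotangent-space computation.
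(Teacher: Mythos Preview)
Your proposal is correct and follows essentially the same approach as the paper's proof: both establish the identification $\cS(S^G_+/(S^G_+)^2) \cong R^G_+/(R^G_+)^2$ via exactness and monoidality of $\cS$ together with Lemma~\ref{lem:schurid}, then apply Lemma~\ref{lem:schurfacts}(2) to deduce that the cotangent space of $R^G$ is nonzero in arbitrarily large degrees, forcing infinite generation. The paper is simply terser about the bookkeeping you spell out (the monoidality step for $(S^G_+)^2$ and the Nakayama-style conclusion).
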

\begin{proof}
By Theorem~\ref{thm:structure}, we have an isomorphism $\cS(S^G_+) \cong R^G_+$. Furthermore, $(S^G_+)^2$ is the image of the multiplication map $S^G_+ \otimes S^G_+ \to S^G_+$; the same holds for $(R^G_+)^2$. Using the fact that $\cS$ is a symmetric monoidal functor, we thus see that $\cS((S^G_+)^2) \cong (R^G_+)^2$, and by the exactness of $\cS$, we also see that $R^G_+/(R^G_+)^2 \cong \cS(S^G/(S^G_+)^2)$.
By Lemma~\ref{lem:schurfacts}(ii), we have $\cS(L_{\lambda}(\bV)) \ne 0$ for $p$-restricted $\lambda$. Therefore, if $S^G_+ / (S^G_+)^2$ has a non $p$-restricted component in arbitrarily large degrees, we have that $R^G_+/(R^G_+)^2$ is also nonzero in infinitely many degrees or, $R^G$ is not finitely generated.
\end{proof}
When $p > 0$, we do not know of an example where $R^G$ is finitely generated but $S^G$ is not.
By the above result, all irreducible constituents of $S^G_+/(S^G_+)^2$ of sufficiently large degree must not be $p$-restricted.

\subsection{Finite groups}
In this subsection, we assume $G$ is a finite group.

The key result that enables us to prove Theorem~\ref{thm:finitegps} is a mild refinement of Richman's remarkable work \cite{rich96inv} regarding the failure of Weyl's polarization theorem for finite groups in the modular setting. Richman shows that if $\chark(k)\mid\#G$, then $S^G$ contains generators of sufficiently large degree -- this result is insufficient for our purpose as the Schur functor kills elements of large multidegree. We begin with a preliminary result.
\begin{lemma}
    Assume $U$ is a polynomial representation of $\GL$ such that none of its irreducible constituents are $p$-restricted. If $w$ is a nonzero weight vector in $U$, the weight of $w$ has at least one component that is $\geq p$. 
\end{lemma}
\begin{proof}
    It suffices to prove this result when $U$ itself is irreducible. Then, by the Steinberg tensor product theorem, we obtain an isomorphism $U \cong L_{\lambda} \otimes L_{\mu}^{(1)}$ where $\lambda$ is $p$-restricted and $\mu$ is nonempty, and $(-)^{(1)}$ is the functor that pulls back a representation along the Frobenius map $\GL \to \GL$. Every weight occurring in $U$ is therefore the sum of weights $\widetilde{\lambda} + p \widetilde{\mu}$ where $\widetilde{\lambda}$ is a weight occurring in $L_{\lambda}$, and similarly for $\widetilde{\mu}$. The result follows.
\end{proof}

The next result follows essentially from Richman's work; we spell this out in the proof.
\begin{proposition}\label{prop:richman}
Assume $G \subset GL(U)$ is a finite group with $\#G$ divisible by $p$. 
For all $n \in \bN$, there exists a $p$-restricted partition $\lambda$ with $|\lambda| > n$ such that $L_{\lambda}(\bV)$ is an irreducible constituent of $S^G_+/(S^G_+)^2$.
\end{proposition}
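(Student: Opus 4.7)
The plan is to refine Richman's indecomposability argument so as to produce generators of $S^G$ of flat multidegree $(1,\ldots,1)$; indecomposable generators of other multidegrees only contribute $\GL$-constituents $L_\lambda(\bV)$ with non-$p$-restricted $\lambda$, which are invisible to the Schur functor and hence insufficient for the conclusion.

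First I would use the multigrading on $S = \Sym(W \otimes \bV)$ in which $w \otimes e_i$ has multidegree equal to the $i$-th standard basis vector of $\bN^{\oplus \infty}$. This refines the $\GL$-weight decomposition and descends to $S^G$. Under it, the $(1^n)$-multihomogeneous part of $S^G$ is canonically $(W^{\otimes n})^G$ with $G$ acting diagonally, while the $(1^n)$-part of $(S^G_+)^2$ is the sum of the images of the concatenation maps
\[
(W^I)^G \otimes (W^J)^G \longrightarrow (W^{\otimes n})^G
\]
taken over partitions $[n] = I \sqcup J$ into two nonempty parts, where $W^I \coloneqq \bigotimes_{i \in I} W$. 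Any simple $\GL$-constituent $L_\lambda(\bV)$ of $S^G_+/(S^G_+)^2$ with nonzero $(1^n)$-weight space must satisfy $|\lambda| = n$ (weights in polynomial $\GL$-representations sum to the degree) and is $p$-restricted by Lemma~\ref{lem:schurfacts}(2). So it suffices to show this cokernel is nonzero for arbitrarily large $n$.

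To produce elements in these cokernels, I would use Cauchy's theorem to pick $\sigma \in G$ of order $p$, and faithfulness of $G \subset GL(W)$ to pick $v \in W$ with $\sigma v \neq v$. For each $n$ I would consider the transfer-type element
\[
\theta_n \;=\; \sum_{g \in G} (gv)^{\otimes n} \;\in\; (W^{\otimes n})^G,
\]
possibly with a mild variant using several seed vectors tailored to Richman's original setup. To argue $\theta_n$ is indecomposable for $n$ in some unbounded set, I would adapt Richman's dual-functional argument from \cite{rich96inv}: Richman constructs, for his analogous commutative transfer generator, a $G$-equivariant orbit-counting functional that evaluates nontrivially on his generator but vanishes on all decomposables, using only that $p \mid \#G$ and the orbit combinatorics of the diagonal $G$-action on multi-indices.

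The principal obstacle is porting Richman's test functional from the commutative to the tensor setting: the concatenation products in $T(W)$ involve $\fS_n$-shuffle combinatorics rather than the symmetric products of $\Sym$, so the functional must be reinterpreted and one must verify that the orbit-divisibility obstruction still rules out decomposability of $\theta_n$ for $n$ in an unbounded set (for instance, $n$ of the form $p^k$). Since Richman's core obstruction is purely group-theoretic and depends only on orbit-size divisibility — not on the combinatorial shape of the product — I expect this step to be largely a notational translation, after which the proposition follows directly.
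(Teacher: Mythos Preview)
Your opening premise is incorrect, and it sends you down an unnecessarily hard path. You assert that ``indecomposable generators of other multidegrees only contribute $\GL$-constituents $L_\lambda(\bV)$ with non-$p$-restricted $\lambda$.'' This is false: a $p$-restricted $L_\lambda(\bV)$ has many weights other than $(1^{|\lambda|})$, so a generator of non-flat multidegree can perfectly well sit inside a $p$-restricted constituent. What \emph{is} true (via Steinberg's tensor product theorem) is that every weight occurring in a non-$p$-restricted $L_\lambda(\bV)$ has at least one component $\geq p$. Hence any weight vector in $S^G_+/(S^G_+)^2$ whose weight has \emph{all components $<p$} already forces a $p$-restricted constituent; flatness is far stronger than needed.

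The paper exploits precisely this. It cites Richman's Proposition~9 as a black box to obtain an indecomposable invariant $f$ of degree $>n$, and then simply observes that Richman's construction forces every monomial of $f$ to divide a fixed monomial $y^*=\prod_i (H_i(x_i))^{p-1}$, whose weight has every nonzero component equal to $p-1$. Thus $f$ is a weight vector with all components $<p$, and the previous paragraph finishes the argument. No adaptation of Richman's functional is required.

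By contrast, your proposed route requires producing \emph{new} indecomposable invariants $\theta_n$ of flat weight and reproving indecomposability for them. You acknowledge this is the ``principal obstacle'' and then assert it should be ``largely a notational translation,'' but you do not carry it out. Richman's test functional is tuned to his specific element (which is not your $\theta_n$), so this is not a translation but a genuinely new argument; in particular there is no reason to expect your transfer element $\sum_{g\in G}(gv)^{\otimes n}$ to be indecomposable for an unbounded set of $n$ without further work. So as written the proof has a real gap, and even if completed it would be substantially more laborious than the paper's two-line reading of the weight of Richman's existing generator.
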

\begin{proof}
Choose $m \geq \frac{n p^{\#G - 1} - 1}{p - 1}$.
By \cite[Proposition~9]{rich96inv} there exists a nonzero homogeneous element in $S^G$, which we shall call $g$, of degree $\geq m \frac{p-1}{p^{\#G-1}-1}$ such that $g$ is nonzero in $S^G_+/(S^G_+)^2$. We may assume that $g$ is a weight vector. In the proof of \cite[Proposition~7]{rich96inv}, Richman shows that the element $g$ can be chosen such that it contains a monomial that divides a nonzero monomial in the polynomial $f(W, m')$ defined at the end of \cite[Page~30]{rich96inv} for some $m' \leq m$. The key point is that by definition, the polynomial $f(W, m')$ is readily seen to be a weight vector for the $\GL$-action, with all nonzero components of its weight being $p-1$. Therefore, all components of the weight of $g$ are also $< p$. By the previous lemma, we get that the $\GL$-representation generated by $g$ in $S^G_+/(S^G_+)^2$ must contain an irreducible representation that is $p$-restricted.
\end{proof}

We can now prove the theorem on finite groups.
\begin{proof}[Proof of Theorem~\ref{thm:finitegps}]
The tca $T(W)$ is noetherian \cite{ss17grobner}. If $\#G \ne 0$ in $k$, then $T(W)^G$ is a direct summand of $T(W)$ using the Reynolds operator. Therefore, $T(W)^G$ is also noetherian. To obtain the degree bound for $T(W)^G$, note that $\Sym(W \otimes \bV)^G$ is generated in degrees $\leq |G|$ by \cite{flei00noeth}, and therefore so is $T(W)^G$ by Corollary~\ref{cor:glfgtcafg}.

In the case when $p \mid \#G$, the invariant ring $T(W)^G$ is not finitely generated by combining Proposition~\ref{prop:richman} and Lemma~\ref{lem:p-rest}. \end{proof}

In the next example, we explain how our ideas can be used to obtain explicit generators of invariant subrings, recovering results of \cite{bddk22sym, bddk23sym}.
\begin{example}\label{exm:multisymmetric}
   Let $k$ be a field of characteristic $p > 0$. Consider the $\fS_p$ action on $T(k^p) = k\langle x_1, x_2, \ldots, x_p \rangle$. We identify $S = \Sym(k^n \otimes \bV)$ with $k[x_{i, j} | i \in [p], j \in \bN]$. The invariant subring $S^G$ is not finitely generated \cite{ryd07gens}: the polynomials $f_n \coloneqq x_{1,1}x_{1,2}\ldots x_{1,n} + x_{2,1}x_{2,2}\ldots x_{2,n} + \ldots + x_{p,1}x_{p,2}\ldots x_{p,n}$ are nonzero in $S^G_+/(S^G_+)^2$. Furthermore, up to scalars, the polynomials $f_n$ are the only nonzero elements of multidegree $(1^n)$ in $S^G/(S^G_+)^2$. The Schur functor maps the $\GL$-representation generated by the $f_n$ to the subspace generated by $p_n \coloneqq x_1^n + x_2^n + \ldots + x_p^n \in T(k^p)$.
   So $\{p_n\}_n$ minimally generates $R^G$.
\end{example}

\section{Some results for invariants of classical groups} \label{s:chevalley}
In this section alone, we further assume that our field $k$ is algebraically closed of characteristic $p > 0$. Our exposition mostly follows \cite{dm20weyl}. 

Let $G$ be a connected split reductive group scheme over $\bZ$. Let $W$ be a finite $\bZ$-module with an algebraic action of $G$, i.e., $W$ is a comodule over the Hopf algebra $\bZ[G]$. The base change $W_k$ is a rational representation of $G_k = G \times_{\bZ} k$. We prove some results about the structure of $T(W_k)^{G_k}$ as tca for specific representations $W$.

\subsection{Universal good filtration} \label{ss:goodfiltration}
We briefly recall the notion of a good filtration for representations of $G$ over $\bZ$, assuming familiarity with the same notion over algebraically closed fields. A good reference is \cite[Part II]{jan03alg}, especially Appendix~B.

We fix a Borel subgroup $B \subset G$ containing a maximal split torus $T$ of $G$. 
For a dominant integral weight $\lambda$, there exists a $G$-representation $\nabla(\lambda)$ such that the dual Weyl module of $G_k$ corresponding to $\lambda$ is isomorphic to the dual Weyl module $\nabla(\lambda)_k$ for every algebraically closed field $k$. We set $\nabla(\lambda)$ to be the dual Weyl module of $G$ corresponding to $\lambda$ over $\bZ$. A $G$-representation has a \textbf{universal good filtration} if it has a filtration such that the successive quotients are isomorphic to dual Weyl modules $\nabla(\lambda)$ with $\lambda$ allowed to vary. Every $G$-representation that admits a universal good filtration is a free $\bZ$-module since the dual Weyl modules over $\bZ$ are themselves free $\bZ$-modules.

\subsection{Results}\label{ss:chevalleyproof}
We now prove a result that easily implies Theorem~\ref{thm:chevalley} since every classical group is connected and split reductive over $\bZ$.
\begin{theorem}
Assume $W$ is a finite $\bZ$-module with an algebraic action of $G$ that admits a universal good filtration. There exists $N>0$ such that the invariant ring $T(W_k)^{G_k}$ is a finitely generated tca over $k$ for all algebraically closed fields $k$ of char $p > N$.
\end{theorem}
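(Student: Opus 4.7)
The plan is to apply the Schur functor bridge (Corollary~\ref{cor:glfgtcafg}) to reduce the claim to a statement about the $\GL$-algebra $S^{G_k} = \Sym(W_k \otimes \bV)^{G_k}$, and then to invoke the polarization theorem of Derksen--Makam \cite{dm20weyl} available in large characteristic. Concretely, Corollary~\ref{cor:glfgtcafg} shows that finite generation of the tca $T(W_k)^{G_k}$ in bounded degree is implied by finite generation of $S^{G_k}$ as a $\GL$-algebra in bounded degree, so it suffices to produce such a degree bound for $S^{G_k}$.

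Next, I would unpack $S^{G_k}$ by writing $\bV = \varinjlim_n k^n$, so that $\Sym(W_k \otimes \bV) = \varinjlim_n \Sym(W_k^{\oplus n})$, where the $\GL$-action restricts on each level to the standard polarization action of $\GL_n$ on the copies of $W_k$. Under this identification, the statement that $S^{G_k}$ is finitely generated as a $\GL$-algebra in degrees $\leq d$ is equivalent to a uniform-in-$n$ assertion: the invariant ring $\Sym(W_k^{\oplus n})^{G_k}$ is generated, as a $k$-algebra, by the polarizations of finitely many invariants of degree $\leq d$ in $\Sym(W_k^{\oplus d})^{G_k}$. This is precisely the kind of statement proved by Derksen--Makam; for a classical group $G$ and a representation $W$ with universal good filtration over $\bZ$, their results yield an integer $N = N(G,W)$ such that for every algebraically closed field $k$ with $\chark(k) > N$, a uniform polarization-based degree bound of this form holds.

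The main obstacle I anticipate is a careful dictionary translation between Derksen--Makam's formulation and ours. Specifically, one must verify that (a) the universal good filtration hypothesis on $W$ over $\bZ$ matches the hypothesis under which their polarization theorem descends uniformly to all sufficiently large characteristics, via the compatibility of dual Weyl modules with base change; and (b) their statement that polarization from a fixed number of copies surjects onto invariants of all direct sums is equivalent, after passing to the $\varinjlim$, to finite generation of the corresponding $\GL$-algebra in our sense. Both are essentially unravelings of definitions, but some care is required in moving between the $\GL_n$-polarizations for varying finite $n$ and the single ambient $\GL$-action on $\bV$. Once this is set up, combining the bound from \cite{dm20weyl} with Corollary~\ref{cor:glfgtcafg} yields the theorem.
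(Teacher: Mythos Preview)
Your proposal is correct and follows essentially the same two-step approach as the paper: reduce via Corollary~\ref{cor:glfgtcafg} to finite generation of the $\GL$-algebra $\Sym(W_k \otimes \bV)^{G_k}$, then invoke Derksen--Makam. The paper simply cites \cite[Theorem~1.10(2)]{dm20weyl} as directly asserting this finite generation for $p \gg 0$, so your careful unpacking of the polarization/direct-limit dictionary, while sound, is more detail than the paper itself supplies.
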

\begin{proof}
By \cite[Theorem~1.7(2)]{dm20weyl} (Theorem~1.10(2) in the arXiv version), the $\GL$-algebra $\Sym(W_k \otimes \bV)^{G_k}$ is finitely generated for $p \gg 0$, and so $T(W_k)^{G_k}$ is a finitely generated tca over $k$ by Corollary~\ref{cor:glfgtcafg}.
\end{proof}

\begin{remark}[Small characteristics] \label{exm:matrixinv}
Consider the rank $n^2$ free $\bZ$-module $W$ consisting of $n \times n$ square matrices endowed with the conjugation action of $\GL_n$. The representation $W$ has a good filtration over $\bZ$ \cite[Page~63]{dkz02mat}. However, the tca $T(W_k)^{GL_n(k)}$ is not finitely generated if char $k \leq n$. Indeed, in Corollary~3.2 of loc.~cit., the authors show that a minimal generating set of $\Sym(W \otimes \bV)^{GL_n(k)}$ contains certain trace elements of weight $1^m$ for all $m > 0$. The result now follows from Lemma~\ref{lem:p-rest} and Lemma~\ref{lem:schurfacts}(2).
\end{remark}

We now establish a result about the $\fS_*$-representation structure of $T(W_k)^{G_k}$.
\begin{theorem} \label{thm:specht}
   Assume $W$ is a $G$-representation over $\bZ$ with a universal good filtration. For $p > \dim(W)$, the invariant ring $T(W_k)^{G_k}$ has a filtration where the successive quotients are Specht modules.
\end{theorem}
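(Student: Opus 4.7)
The plan is to obtain the Specht filtration on $T(W_k)^{G_k}$ by pushing forward a good filtration of the $\GL$-algebra $S^{G_k} := \Sym(W_k \otimes \bV)^{G_k}$ through the Schur functor. Three ingredients make this strategy work: $\cS$ is exact (Lemma~\ref{lem:schurfacts}(1)), it sends the Schur module $\bS_\lambda(\bV)$ to the Specht module $S^\lambda$ for every partition $\lambda$ (Lemma~\ref{lem:schurfacts}(3)), and $\cS(S^{G_k}) \cong T(W_k)^{G_k}$ (Lemma~\ref{lem:schurid}). Consequently, any filtration of $S^{G_k}$ by sub-$\GL$-representations whose successive quotients are Schur modules $\bS_\lambda(\bV)$ is carried by $\cS$ to a Specht filtration of $T(W_k)^{G_k}$.

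The real work is therefore to produce a good filtration on $S^{G_k}$. I would exploit the joint $G_k \times \GL$-action on $\Sym(W_k \otimes \bV)$. By hypothesis $W_k$ is a good-filtration $G_k$-module, and $\bV$ is a dual Weyl module for $\GL$, so $W_k \otimes \bV$ has a good filtration as a $G_k \times \GL$-representation. The main step is to bootstrap this into a good filtration of the full symmetric algebra $\Sym(W_k \otimes \bV)$: for each $d$, the Cauchy filtration on $\Sym^d(W_k \otimes \bV)$ has successive quotients $\bS_\lambda(W_k) \otimes \bS_\lambda(\bV)$ with $|\lambda| = d$, and under $p > \dim W$ one verifies that each $\bS_\lambda(W_k)$ has a good filtration as a $G_k$-representation. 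Concatenating the Cauchy filtrations across all $d$ yields a good filtration on $\Sym(W_k \otimes \bV)$ for $G_k \times \GL$. Taking $G_k$-invariants then preserves the good filtration for the residual $\GL$-action, since $H^i(G_k, V) = 0$ for $i > 0$ whenever $V$ has a good filtration (Mathieu/Donkin).

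The main obstacle is the intermediate claim that each $\bS_\lambda(W_k)$ has a good filtration as a $G_k$-representation whenever $p > \dim W$. Mathieu's tensor-product theorem handles the tensor powers $W_k^{\otimes|\lambda|}$, but in positive characteristic $\bS_\lambda(W_k)$ is only a subquotient of such a tensor power, so inheriting a good filtration is not automatic. The hypothesis $p > \dim W$ restricts attention to partitions $\lambda$ with at most $\dim W$ parts (outside of which $\bS_\lambda(W_k) = 0$) and is expected to supply the cohomological vanishing that lets the Schur functor construction commute with the $G_k$-structure. I would look for the precise statement, or a close variant, in the good-filtration analysis of Derksen--Makam \cite{dm20weyl} that underpins their finite-generation theorem (and hence our Theorem~\ref{thm:chevalley}), and either cite or adapt it here.
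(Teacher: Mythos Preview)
Your approach is essentially identical to the paper's: apply the Schur functor to a good filtration of $\Sym(W_k\otimes\bV)^{G_k}$ using Lemma~\ref{lem:schurfacts}(1),(3) and Lemma~\ref{lem:schurid}. The paper simply cites \cite[Corollary~6.8]{dm20weyl} for the existence of that good filtration when $p>\dim(W)$, which is exactly the Derksen--Makam input you anticipated needing; your Cauchy-filtration sketch is a plausible outline of how that result is proved, but it is not needed here.
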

\begin{proof}
   By \cite[Corollary~27]{dm20weyl} (Corollary~6.8 in the arXiv version), when $\chark(k) > \dim(W)$, the $\GL$-algebra $\Sym(W \otimes \bV)^{G_k}$ has a filtration with associated graded pieces isomorphic to Schur modules $\bS_{\lambda}(\bV)$. The result now follows by combining (1) and (3) of Lemma~\ref{lem:schurfacts}.
\end{proof}

\section{Gelfand--Kirillov dimension}\label{s:dimensions}
Recall that $k$ is an infinite field of characteristic $p \geq 0$. Let $\cC$ be a symmetric monoidal $k$-linear category of Krull--Gabriel dimension zero, and $A$ be a finitely generated algebra object in $\cC$ with $V \subset A$ being a finite length subobject of $A$ that generates it. Let $f_{A;V}(n)$ be the length of the image $A_{V, \leq n}$ of the canonical map $\bigoplus_{i \leq n} \Sym^i(V) \to A$. 
\begin{defn}
    The \textbf{categorical Gelfand--Kirillov dimension} of $A$ with respect to $V$, denoted $\dim(A; V)$, is the smallest real number $r$ such that there exists a real number $c$ with $f_{A;V}(n) \leq c n^r$ for all $n$. If no such real number exists, we set $\dim(A;V) = \infty$.
\end{defn}
As with the ordinary Gelfand--Kirillov dimensions, we have:
\begin{proposition}\label{prop:GKdimind}
    Assume $V$ and $W$ are two finite length subobjects of $A$ that generate it. Then $\dim(A;V) = \dim(A;W)$.
\end{proposition}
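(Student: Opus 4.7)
The argument will adapt the standard proof that the classical Gelfand--Kirillov dimension does not depend on the choice of finite-dimensional generating subspace. I claim it suffices to produce a positive integer $m = m(V, W)$ with $A_{W, \leq n} \subseteq A_{V, \leq mn}$ for all $n \geq 0$: taking lengths then yields $f_{A;W}(n) \leq f_{A;V}(mn)$, so if $f_{A;V}(n) \leq c n^r$ for all $n$, then $f_{A;W}(n) \leq (c m^r) n^r$, whence $\dim(A;W) \leq \dim(A;V)$. A symmetric argument will give the reverse inequality.

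The first key step is to produce an $m$ with $W \subseteq A_{V, \leq m}$. Because $V$ generates $A$, the subobjects $\{A_{V, \leq n}\}_n$ form an increasing exhaustive filtration with colimit $A$. Intersecting with $W$ gives an ascending chain $\{W \cap A_{V, \leq n}\}_n$ of subobjects of $W$; since filtered colimits commute with finite limits in the locally finite length category $\cC$ (Ab5), this chain has colimit $W$. As $W$ has finite length, the chain stabilizes, producing the required $m$.

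The second step bootstraps this to $A_{W, \leq n} \subseteq A_{V, \leq mn}$. Since $A$ is a commutative algebra object, the universal property of the free commutative algebra $\Sym(V)$ implies that, for any $j_1, \ldots, j_i$, the map
\[ \Sym^{j_1}(V) \otimes \cdots \otimes \Sym^{j_i}(V) \to A \]
obtained by tensoring the structure maps and then multiplying factors through $\Sym^{j_1 + \cdots + j_i}(V) \to A$. Restricting to $j_k \leq m$ and using right exactness of $\otimes$ shows that the iterated multiplication $A_{V, \leq m}^{\otimes i} \to A$ has image in $A_{V, \leq mi}$. Composing with $W^{\otimes i} \to A_{V, \leq m}^{\otimes i}$ (from $W \hookrightarrow A_{V, \leq m}$) and using commutativity to factor $W^{\otimes i} \to A$ through $\Sym^i(W)$ places the image of $\Sym^i(W) \to A$ inside $A_{V, \leq mi}$. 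Summing over $i \leq n$ delivers the claim.

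The main obstacle is the first step: showing categorically that every finite length subobject of $A$ is contained in some truncation $A_{V, \leq m}$. This hinges on the Krull--Gabriel dimension zero hypothesis together with the Ab5 property of $\cC$. The second step is largely formal, combining the commutative-algebra-object structure of $A$ with right exactness of the tensor product---standard features of the symmetric monoidal categories appearing in this paper.
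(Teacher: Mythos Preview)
Your proposal is correct and follows essentially the same route as the paper: find $m$ with $W \subset A_{V,\leq m}$, deduce $f_{A;W}(n) \leq f_{A;V}(mn)$, and conclude by symmetry. The paper compresses this into two sentences and takes both the existence of $m$ and the bootstrap $A_{W,\leq n} \subseteq A_{V,\leq mn}$ for granted, whereas you spell out the finite-length/Ab5 argument for the former and the $\Sym$/right-exactness argument for the latter.
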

\begin{proof}
    It suffices to show that $\dim(A;V) \leq \dim(A;W)$. There exists $l > 0$ such that $W \subset A_{V, \leq l}$, so we have $f_{A; W}(n)  \leq f_{A; V}(nl)$, which in turn gives the required inequality.
\end{proof}
We set $\GKdim(A) = \dim(A;V)$ for any finite length generating subobject $V \subset A$; this is well-defined by the above proposition. We can similarly define $\GKdim$ for module objects for $A$ in $\cC$. The next result, along with the example that follows, allows us to compute $\GKdim$ for invariant subrings of finite groups when $p=0$. Interestingly, it is finite. 
\begin{lemma}
    Assume $A \to B$ is a map of algebra objects in $\cC$ such that $B$ is finitely generated as an $A$-module and the map splits as a map of $A$-modules. Then $\GKdim(A) = \GKdim(B)$.
\end{lemma}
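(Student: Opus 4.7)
The plan is to prove both $\GKdim(A) \leq \GKdim(B)$ and $\GKdim(B) \leq \GKdim(A)$ by choosing compatible generating subobjects. First I would fix a finite length generator $V \subset A$ of the algebra, then use that $B$ is finitely generated as an $A$-module to pick a finite length subobject $W \subset B$ containing $1_B$ such that the multiplication $A \otimes W \to B$ is surjective. Setting $U = f(V) + W$ yields a finite length subobject of $B$ that generates $B$ as an algebra, since the subalgebra generated by $U$ contains both $f(A)$ and $W$, hence contains $f(A) \cdot W = A \cdot W = B$.

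For the direction $\GKdim(A) \leq \GKdim(B)$, I would use that the splitting of $f$ as a map of $A$-modules forces $f$ to be injective, so $f$ restricts to an embedding $A_{V, \leq n} \hookrightarrow B_{U, \leq n}$ for each $n$. This immediately gives $f_{A;V}(n) \leq f_{B;U}(n)$.

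For the harder direction $\GKdim(B) \leq \GKdim(A)$, the key step is to produce a containment $B_{U, \leq n} \subseteq A_{V, \leq Cn} \cdot W$ for some absolute constant $C$. The image of $W \otimes W \to B$ is a finite length subobject of $B = \bigcup_m A_{V, \leq m} \cdot W$, so by the locally finite nature of $\cC$ it lies in $A_{V, \leq d} \cdot W$ for some $d$. A straightforward induction on $j$, using the symmetric-monoidal commutativity of $B$, then gives that the image of $\Sym^j(W) \to B$ sits inside $A_{V, \leq d(j-1)} \cdot W$ for $j \geq 1$. Decomposing $\Sym^{\leq n}(U) = \bigoplus_{i+j \leq n} \Sym^i(f(V)) \otimes \Sym^j(W)$ then yields $B_{U, \leq n} \subseteq A_{V, \leq Cn} \cdot W$ with $C = d+1$. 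To finish, the surjection $A_{V, \leq Cn} \otimes W \twoheadrightarrow A_{V, \leq Cn} \cdot W$ would bound $f_{B;U}(n)$ by a constant multiple of $f_{A;V}(Cn)$, provided one can control the length of $A_{V, \leq Cn} \otimes W$ linearly in $f_{A;V}(Cn)$.

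The hardest part will be this final length estimate, which amounts to bounding $\operatorname{length}(X \otimes W) \leq c_W \cdot \operatorname{length}(X)$ as $X$ ranges over the family $\{A_{V, \leq n}\}_n$. In the main examples $\Rep^{\pol}(\GL)$ and $\Rep(\fS_*)$ this holds because the simple constituents of $A_{V, \leq n}$ have bounded combinatorial complexity in terms of $V$, which makes $c_W$ independent of $n$; in full generality this may need to be built into the setup as an additional hypothesis on $\cC$, or finessed by combining the above with the direct-sum decomposition $B \cong A \oplus C$ coming from the splitting.
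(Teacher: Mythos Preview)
Your argument is considerably more detailed than the paper's, which simply reads ``This is similar to the proof of Proposition~\ref{prop:GKdimind} so we skip it.'' The structure you outline---choose $V$ generating $A$, enlarge to $U=f(V)+W$ generating $B$, then compare $A_{V,\le n}$ with $B_{U,\le n}$ in both directions via the containments $f(A_{V,\le n})\subset B_{U,\le n}$ and $B_{U,\le n}\subset A_{V,\le Cn}\cdot W$---is exactly the analogue of that proposition the author has in mind, and your induction showing $W^j\subset A_{V,\le d(j-1)}\cdot W$ is correct.

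You have also put your finger on the one point the paper glosses over: passing from $B_{U,\le n}\subset A_{V,\le Cn}\cdot W$ to a bound $f_{B;U}(n)\le c\, f_{A;V}(Cn)$ requires controlling $\operatorname{length}(X\otimes W)$ in terms of $\operatorname{length}(X)$, and in an arbitrary Krull--Gabriel dimension zero tensor category there is no reason the length of a tensor product of simples should be uniformly bounded. Your instinct that an extra hypothesis on $\cC$ is needed in full generality is well founded; the paper is being informal here. In the intended application (the sentence immediately following the lemma, computing $\GKdim(T(W)^G)$ for $G$ finite in characteristic~$0$), your heuristic does go through, but the clean reason is not quite ``bounded combinatorial complexity'': it is that all simple constituents of $A_{V,\le n}\subset T(W)$ are $S^{\lambda}$ with $\lambda$ having at most $\dim W$ rows, and for such $\lambda$ the length of $S^{\lambda}\boxtimes S^{\mu}$ with $\mu$ fixed is bounded independently of $|\lambda|$ (by iterated Pieri). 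So your proof is complete for the case the paper actually uses, and your caveat about general $\cC$ is a genuine observation the paper does not address.
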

\begin{proof}
This is similar to the proof of Proposition~\ref{prop:GKdimind} so we skip it.
\end{proof}

Recall that the ordinary Gelfand--Kirillov dimension of noncommutative tensor algebras is infinite. We now compute $\GKdim(T(W))$ as a tca.
\begin{example}
Let $W$ be an $n$-dimensional vector space. By Cauchy's formula, the $\GL$-algebra $\Sym(W \otimes \bV)$ has a filtration with associated graded pieces $\bS_{\lambda}(W) \otimes \bS_{\lambda}(\bV)$ with $\lambda$ varying over all partitions. Applying the Schur functor and using Lemma~\ref{lem:schurfacts}, we get that $T(W)$ has a filtration with associated graded pieces $\bS_{\lambda}(W) \otimes S^{\lambda}$. Therefore, we get
\[ f_{A; W}(N) = \sum_{|\lambda| \leq N} \dim(\bS_{\lambda}(W)) \len(S^{\lambda}).\] 
Recall that $\bS_{\lambda}(W)$ vanishes if $\lambda$ has $> \dim(W)$ nonzero parts. For the sake of simplicity, we now assume $\chark(k) = p = 0$ so that $\len(S^{\lambda}) = 1$ in the above sum.
We now estimate $f_{A; W}$: the number of partitions of $N$ with at most $n$ parts is $\approx c N^{n-1}$ with $c$ a constant for $N \gg n$; and the dimension of the Schur module $S_{\lambda}(W)$ with $|\lambda| \gg n$ and $\lambda_{n+1} = 0$ is $\approx d |\lambda|^{\frac{n(n-1)}{2}}$ where $d$ is another constant, as can be seen using Weyl's dimension formula. Therefore, $f_{A;W}(N) \approx \sum_{i=0}^N cd i^{n-1} i^{\frac{n(n-1)}{2}}$ which has dominant term $N^{\frac{n(n+1)}{2}}$. So $\GKdim(T(W)) = \binom{n+1}{2}$. We emphasize that this agrees with the computation of the Krull--Gabriel dimension of the module category of the tca $T(W)$ in \cite[\S~3.5]{ss19gl2}.
\end{example}

It would be interesting to also compute $\GKdim$ of $T(W)$ when $p > 0$. If $\dim(W) \in \{1, 2\}$, then $\GKdim$ is independent of characteristic, as $\len(S^{\lambda})$ for partitions with $\leq 2$ parts is at most $\approx\log(|\lambda|)$ (see references cited in Example~\ref{exm:SL2}). 

We note that $\GKdim$ does not provide anything meaningful for polynomial tcas generated in degree $\geq 2$. We show it is infinite for the polynomial algebra of the trivial representation $\triv_2$ of $\fS_2$.
\begin{example}\label{exm:symsym2}
The $\GL$-algebra $\Sym(\Sym^2(\bV))$ has a multiplicity-free filtration by Schur modules $\bS_{2\lambda}(\bV)$ \cite{bof91plethysm}, where $2\lambda$ is the partition $(2\lambda_1, 2\lambda_2, \ldots)$. Applying the Schur functor, we see that the tca $A = \Sym(\triv_2)$ has a filtration by the Specht modules $S_{2 \lambda}$. 
Let $p(n)$ be the partition function. Then in characteristic zero, we have $f_{A; \triv_2}(n) = \sum_{i\leq n} p(i)$, and $f_{A; \triv_2} \geq \sum_{i \leq n} p(i)$ since $\len(S_{\lambda}) \geq 1$. The partition function is subexponential so $\GKdim(A) = \infty$ in all characteristics. 
\end{example}
We finally compute $\GKdim$ for $T(k^2)^{\SL_2(k)}$ with $k$ algebraically closed.
\begin{example}\label{exm:SL2}
On the $\GL$-algebra side, the $\SL_2(k)$-invariant subring of $\Sym(\bV \oplus \bV)$ is isomorphic to the quotient of $\Sym(\lw^2(\bV))$ by the ideal generated by the Pl\"ucker relations. By \cite[Theorem~2.7]{bof91plethysm}, this quotient ring has a multiplicity-free filtration by the Schur modules $\bS_{(2^n)}(\bV)$. Applying the Schur functor and using Theorem~\ref{thm:structure} and Lemma~\ref{lem:schurfacts}(3), we see that $A \coloneqq T(k^2)^{\SL_2(k)}$ has a multiplicity-free filtration by the Specht modules $S_{(2^n)}$.
If $\chark(k) = p =0$, we immediately see that $\GKdim(A) = 1$. 
If instead $p > 0$, then we have $S^{\lambda} \cong S^{\lambda^t} \otimes \sgn$ where $\sgn$ is the sign representation of $\fS_{|\lambda|}$, so $\len(S_{(2^n)}) = \len(S_{(n, n)})$. 
By the work of James (see \cite[Theorem~2.5]{fay03specht}), the length of $S_{(n, n)}$ is $\approx {\lfloor \log_p(\frac{n+1}{2}) \rfloor}$. Combining the previous two facts, we get that $f_{A;A_2}(n+1) - f_{A; A_2}(n) \approx \log_p(n)$ from which we obtain $\GKdim(A) = 1$.
\end{example}
\bibliographystyle{alpha}
\bibliography{bibliography}
\end{document}